\let\NAT@parse\undefined
\def\doi{10.23919/ecc65951.2025.11186977}
\newtheorem{ass}{Assumption}
\newtheorem{thm}{Theorem}
\newtheorem{prop}{Proposition}
\newtheorem{lem}{Lemma}
\newtheorem{cor}{Corollary}
\newtheorem{rem}{Remark}
\title{\LARGE \bf
	Optimal state estimation: Turnpike analysis and performance results
}
\author{Julian D. Schiller, Lars Grüne, and Matthias A. Müller
\thanks{This work was supported by the Deutsche Forschungsgemeinschaft (DFG, German Research Foundation), Projects 426459964 and 499435839.} 
\thanks{Julian D. Schiller and Matthias A. Müller are with the Leibniz University Hannover, Institute of Automatic Control, 30167 Hannover, Germany. Lars Grüne is with the Chair of Applied Mathematics, University of Bayreuth, 95447 Bayreuth, Germany.
	{\tt\small schiller@irt.uni-hannover.de},
	{\tt\small lars.gruene@uni-bayreuth.de},
	{\tt\small mueller@irt.uni-hannover.de}.}%
}
\begin{document}

\maketitle
\thispagestyle{empty}
\thispagestyle{copyright}

\begin{abstract}
	In this paper, we introduce turnpike arguments in the context of optimal state estimation. In particular, we show that the optimal solution of the state estimation problem involving all available past data serves as turnpike for the solutions of truncated problems involving only a subset of the data.	We mathematically formalize this phenomenon and derive a sufficient condition that relies on a decaying sensitivity property of the underlying nonlinear program. As second contribution, we show how a specific turnpike property can be used to establish performance guarantees when approximating the optimal solution of the full problem by a sequence of truncated problems, and we show that the resulting performance (both averaged and non-averaged) is approximately optimal with error terms that can be made arbitrarily small by an appropriate choice of the horizon length. In addition, we discuss interesting implications of these results for the practically relevant case of moving horizon estimation and illustrate our results with a numerical example.
\end{abstract}

\section{Introduction}
Reconstructing the internal state trajectory of a dynamical system based on a batch of measured input-output data is an important problem of high practical relevance.
This can be accomplished, for example, by solving an optimization problem to find the best state and disturbance trajectories that minimize a suitably defined cost function depending on the measurement data.
If all available data is taken into account, this corresponds to the \emph{full information estimation} (FIE) problem.
However, if the data set or the underlying model is very large or only a limited amount of computation time or resources is available (as is the case with, e.g., online state estimation), the optimal solution to the FIE problem is usually difficult (or even impossible) to compute in practice.
For this reason, it is essential to find a reasonable approximation, e.g., using a sequence of truncated optimal estimation problems, each of which uses only a limited time window of the full data set.
In the case of online state estimation, this corresponds to \emph{moving horizon estimation} (MHE), where at each discrete time step an optimal state estimation problem is solved with a data set of fixed size.

Current research in the field of MHE is primarily concerned with stability and robustness guarantees, see, e.g., \cite[Ch.~4]{Rawlings2017} and \cite{Allan2021a,Knuefer2023,Schiller2023c,Hu2023}. These works essentially show that under suitable detectability conditions, the estimation error of MHE converges to a neighborhood of the origin, whose size depends on the true unknown disturbance. However, results on the actual performance of MHE, and in particular on the approximation accuracy and performance loss compared to the desired optimal (but unknown) FIE solution, are lacking.

Whereas performance guarantees for state estimators are generally rather rare and usually restricted to linear systems (cf., e.g.,~\cite{Sabag2022,Brouillon2023}), they often play an important role in nonlinear optimal control, especially when the overall goal is an economic one.
Corresponding results usually employ a \emph{turnpike} property of the underlying optimal control problem, cf.~\cite{McKenzie1986,Carlson1991}.
This property essentially implies that optimal trajectories stay close to an optimal equilibrium (or in general an optimal time-varying reference) most of the time, which is regarded as the \emph{turnpike}.
Turnpike-related arguments are an important tool for assessing the closed-loop performance of model predictive controllers with general economic costs on finite and infinite horizons, cf. \cite{Gruene2016b,Faulwasser2022,Gruene2019}.
Necessary and sufficient conditions for the presence of the turnpike phenomenon in optimal control problems are discussed in, e.g.,~\cite{Gruene2016a,Trelat2023} and are usually based on dissipativity, controllability, and suitable optimality conditions.

Our contribution is twofold.
First, in Section~\ref{sec:turnpike} we introduce turnpike arguments in the context of optimal state estimation, which is a novelty in itself. In particular, we show that the optimal FIE solution involving all past data serves as turnpike for the solutions of truncated state estimation problems. We mathematically formalize this phenomenon and derive a sufficient condition relying on a decaying sensitivity property of the underlying nonlinear program.
Second, in Section~\ref{sec:performance_sec}, we provide novel performance guarantees for optimal state estimation.
To this end, we construct a candidate for approximating the FIE solution by a sequence of truncated problems, which can be solved in parallel for offline estimation. Assuming that the optimal state estimation problem exhibits a certain turnpike property, we show that the performance of this candidate is approximately optimal, both averaged and non-averaged, with error terms that can be made arbitrarily small by an appropriate choice of horizon length.
In addition, we discuss interesting implications of these results for MHE.

\subsubsection*{Notation}
We denote the set of integers by $\mathbb{I}$, the set of all integers greater than or equal to $a \in \mathbb{I}$ by $\mathbb{I}_{\geq a}$, and the set of integers in the interval $[a,b]\subset\mathbb{I}$ by $\mathbb{I}_{[a,b]}$.
The Euclidean norm of a vector $x\in\mathbb{R}^n$ is denoted by $|x|$, and the weighted norm with respect to a positive definite matrix $Q=Q^\top$ by $|x|_Q=\sqrt{x^\top Q x}$. The maximum eigenvalue of $Q$ is denoted by $\lambda_{\max}(Q)$.
We refer to a sequence $\{x_j\}_{j=a}^b$, $x_j\in\mathbb{R}^n$, $j\in\mathbb{I}_{[a,b]}$ with $x_{a:b}$.
Finally, we recall that a function $\alpha:\mathbb{R}_{\geq 0}\rightarrow\mathbb{R}_{\geq 0}$ is of class $\mathcal{K}$ if it is continuous, strictly increasing, and satisfies $\alpha(0)=0$.
By $\mathcal{L}$, we refer to the class of functions $\theta:\mathbb{R}_{\geq 0}\rightarrow \mathbb{R}_{\geq 0}$ that are continuous, non-increasing, and satisfy $\lim_{s\rightarrow\infty}\theta(s)=0$, and by $\mathcal{KL}$ to the class of functions $\beta:\mathbb{R}_{\geq 0}\times\mathbb{R}_{\geq 0}\rightarrow\mathbb{R}_{\geq 0}$ with $\beta(\cdot,s)\in\mathcal{K}$ and $\beta(r,\cdot)\in\mathcal{L}$ for any fixed $s\in\mathbb{R}_{\geq 0}$ and $r\in\mathbb{R}_{\geq 0}$, respectively.

\section{Problem Setup}\label{sec:problem}

\subsection{System description}
We consider the following system
\begin{subequations}\label{eq:sys}
	\begin{align}
		{x}_{t+1} &= f(x_t,u_t,w_t),\label{eq:sys_1}\\
		y_t &= h(x_t,u_t) + v_t\label{eq:sys_2}
	\end{align}
\end{subequations}
with discrete time $t\in\mathbb{I}_{\geq0}$, state $x_t\in\mathbb{R}^n$, (known) control input $u_t\in\mathbb{R}^m$, (unknown) process disturbance $w_t\in\mathbb{R}^q$, (unknown) measurement noise $v_t\in\mathbb{R}^p$, and noisy output measurement $y_t\in\mathbb{R}^p$.
The functions $f:\mathbb{R}^n\times\mathbb{R}^m\times\mathbb{R}^q\rightarrow \mathbb{R}^n$ and $h:\mathbb{R}^n\times\mathbb{R}^m\rightarrow \mathbb{R}^p$ define the system dynamics and output equation, which we assume to be continuous.
We further assume that trajectories of the system~\eqref{eq:sys} satisfy
\begin{equation*}
		(x_t,u_t,w_t,v_t,y_t) \in \mathcal{X}\times\mathcal{U}\times\mathcal{W}\times\mathcal{V}\,\times\,\mathcal{Y},\  t\in\mathbb{I}_{\geq0}
\end{equation*}
for some known sets $\mathcal{X}\subseteq\mathbb{R}^n$, $\mathcal{U}\subseteq\mathbb{R}^m$, $\mathcal{W}\subseteq\mathbb{R}^q$, $\mathcal{V}\subseteq\mathbb{R}^p$, $\mathcal{Y}\subseteq\mathbb{R}^p$.
Such knowledge typically arises from the physical nature of the system (e.g., non-negativity of certain physical quantities such as partial pressure or absolute temperature), the incorporation of which can significantly improve the estimation results, cf., e.g.,~\cite[Sec.~4.4]{Rawlings2017} and \cite{Schiller2023c}.

\subsection{Optimal state estimation}
For ease of notation, we define the input-output data tuple $d_t := (u_t,y_t)$, $t\in\mathbb{I}_{\geq0}$. Now, consider a given data batch~$d_{0:T}$ for some $T\in\mathbb{I}_{\geq0}$.
We aim to compute the state and disturbance sequences $\hat{x}_{0:T}$ and $\hat{w}_{0:T-1}$ that are optimal in the sense that they minimize a cost function involving the full data set $d_{0:T}$.
In particular, we consider the following FIE optimization problem $P_T(d_{0:T})$:
\begin{subequations}\label{eq:MHE}
	\begin{align}\label{eq:MHE_0}
		&\min_{\hat{x}_{0:T},\, \hat{w}_{0:T-1}}\  J_{T}(\hat{x}_{0:T},\hat{w}_{0:T-1}; d_{0:T}) \\ 
		\text{s.t. }\
		&\hat{x}_{j+1}=f(\hat{x}_{j},u_j,\hat{w}_{j}), \ j\in\mathbb{I}_{[0,T-1]}, \label{eq:MHE_f} \\
		&\hat{x}_{j}\in\mathcal{X}, \ y_j-h(\hat{x}_{j},u_j)\in\mathcal{V}, \ j\in\mathbb{I}_{[0,T]}, \label{eq:MHE_x} \\
		&\hat{w}_{j}\in\mathcal{W},\ j\in\mathbb{I}_{[0,T-1]}.\label{eq:MHE_w}
	\end{align}
\end{subequations}
For convenience, we define the combined sequence $\hat{z}_{0:T}$ satisfying $\hat{z}_j=(\hat{x}_j,\hat{w}_j)$ for $j\in\mathbb{I}_{[0,T-1]}$ and $\hat{z}_T = (\hat{x}_T,0)$. The constraints~\eqref{eq:MHE_f}--\eqref{eq:MHE_w} enforce the prior knowledge about the system model, the domain of the true trajectories, and the disturbances/noise (feasibility is always guaranteed due to our standing assumptions).
We consider the cost function
\begin{equation}\label{eq:MHE_cost}
	J_T(\hat{x}_{0:T},\hat{w}_{0:T-1} ;d_{0:T}) {\,=}\hspace{-0.1ex} \sum_{j=0}^{T-1}\hspace{-0.3ex}l(\hat{x}_{j},\hat{w}_{j};d_j){\,+\,} g(\hat{x}_{T};d_T)
\end{equation}
with continuous stage cost $l:\mathcal{X}\times\mathcal{W}\times\mathcal{U}\times\mathcal{Y}\rightarrow\mathbb{R}_{\geq0}$ and terminal cost $g:\mathcal{X}\times\mathcal{U}\times\mathcal{Y}\rightarrow\mathbb{R}_{\geq0}$.
In the state estimation context, a cost function with terminal cost is usually referred to as the \textit{filtering form} of the state estimation problem, see also~\cite[Ch.~4]{Rawlings2017} for a discussion on this topic.
Note that~\eqref{eq:MHE_cost} is a generalization of classical designs for state estimation, where $l$ and $g$ are positive definite in the disturbance input~$\hat{w}$ and the fitting error $y-h(\hat{x},u)$, cf.~\cite[Ch.~4]{Rawlings2017}; it particularly includes the practically relevant case of quadratic cost functions
	\begin{align}
	l(x,w;d) &= |w|^2_Q + |y-h(x,u)|^2_R, \label{eq:stage_cost}\\
	g(x;d) &= |y-h(x,u)|^2_G, \label{eq:term_cost}
\end{align}
where $Q,R,G$ are positive definite weighting matrices.
However, all results obtained in this paper also hold for more general cost functions $l$ and $g$, which allow the objective~\eqref{eq:MHE_cost} to be tailored to the specific problem at hand.

The FIE problem $P_T$ in~\eqref{eq:MHE} is a parametric nonlinear program, the solution of which solely depends on the (input-output) data provided, i.e., the sequence $d_{0:T}$.
We characterize solutions to $P_T$ using a generic mapping $\zeta_T:\mathbb{I}_{[0,T]}\times(\mathcal{U}\times\mathcal{Y})^{T+1}\rightarrow\mathcal{X}\times\mathcal{W}$ that maps data to optimal states and disturbance inputs as
\begin{equation}
	z^*_j = \zeta_T(j,d_{0:T}), \ j\in\mathbb{I}_{[0,T]}, \label{eq:z_T}
\end{equation}
with the value function $V_T(d_{0:T}) = J_T(x^*_{0:T},w^*_{0:T-1};d_{0:T})$.

If the data set or the underlying model is large and/or the computations are limited in terms of time or resources (as is the case in many practical applications), solving the FIE problem $P_T$ for the optimal solution is usually difficult (or even impossible).
For this reason, we also consider the truncated optimal state estimation problem  ${P_N(d_{\tau:\tau+N})}$ using a fixed horizon length $N\in\mathbb{I}_{[0,T]}$, i.e., the problem~$P_T$ in~\eqref{eq:MHE} with $T$ replaced by $N$ and the truncated data sequence $d_{\tau:\tau+N}$ for some $\tau\in\mathbb{I}_{[0,T-N]}$. We characterize corresponding solutions using the generic solution mapping $\zeta_N$:
\begin{equation}
	\hat{z}_{\tau+j}^* := \zeta_N(j,d_{\tau:\tau+N}),\ j\in\mathbb{I}_{[0,N]}. \label{eq:z_N}
\end{equation}
Moreover, with $\zeta_N^x$ we refer to the state variable defined by $\zeta_N$ such that $\hat{x}^*_{\tau+j} = \zeta_N^x(j,d_{\tau:\tau+N})$ for all $j\in\mathbb{I}_{[0,N]}$.

In the following, we investigate how the solution $\hat{z}^*_{\tau:\tau+N}$ of the truncated estimation problem $P_N(d_{\tau:\tau+N})$ behaves compared to the optimal FIE solution $z^*_{0:T}$ on the interval $\mathbb{I}_{[\tau,\tau+N]}$ (Section~\ref{sec:turnpike}), how it can be suitably approximated on $\mathbb{I}_{[0,T]}$ using a sequence of truncated problems $P_N$, and how good this approximation actually is (Section~\ref{sec:performance_sec}).

\section{Turnpike in optimal state estimation}\label{sec:turnpike}

Optimal state estimation problems (such as~$P_T$ in~\eqref{eq:MHE}) can generally be interpreted as optimal control problems using $\hat{w}$ as the control input, cf.~\cite[Sec.~4.2.3]{Rawlings2017} and \cite[Sec.~4]{Allan2020a}.
In particular, a cost function~\eqref{eq:MHE_cost} which is positive definite in the estimated disturbance and the fitting error (as, e.g., in \eqref{eq:stage_cost} and \eqref{eq:term_cost}) can be regarded as an economic output tracking cost associated with the ideal reference $(w^\mathrm{r}_j,y^\mathrm{r}_j) = (0,y_j)$, $j\in\mathbb{I}_{[0,T]}$.
This reference, however, is unreachable, as it is generally impossible to attain zero cost $V_T(d_{0:T})=0$ (except for the special case where $y_{0:T}$ corresponds to an output sequence of~\eqref{eq:sys} under zero disturbances $w_{0:T-1}\equiv0$, $v_{0:T}\equiv0$).
For unreachable references, on the other hand, it is known that the corresponding optimal control problem exhibits the turnpike property with respect to the \emph{best reachable reference}~\cite{Koehler2019}, which suggests that a similar phenomenon can also be expected in optimal state estimation.
In Section~\ref{sec:mot_example}, we provide a simple example that supports this intuition. Then, we formalize this behavior in Section~\ref{sec:turnpike_sensitivity} and draw connections to the concept of decaying sensitivity.

\begin{figure*}[t]
	\vspace{1.2ex}
	\hspace{1.2ex}
	\includegraphics{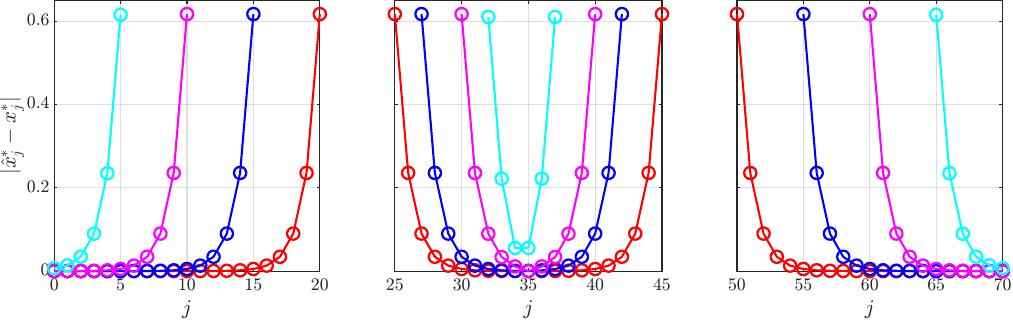}
	\caption{Difference between the optimal solution $x^*_j$ involving the full data batch with $T=70$ and the solution $\hat{x}^*_j$ of the truncated problem, plotted over $j\in\mathbb{I}_{[\tau,\tau+N]}$ for $N{=\,}5$ (cyan), $N{=\,}10$ (magenta), $N{=\,}15$ (blue), and $N{=\,}20$ (red) for $\tau{\;=\;}0$ (left), $\tau{\;=\;}\lfloor ({T{-\,}N})/2 \rfloor$ (middle), and $\tau{\;=\;}T{-\,}N$ (right).}
	\label{fig:OE}
\end{figure*}
	
\subsection{Motivating example}\label{sec:mot_example}
Suppose that the data $y_{0:T}$ is measured from the system $x_{t+1} = x_t + w_t$, $x_0=1$, $y_t = x_t + v_t$, where $w_t = v_t = 1$ for $t\in\mathbb{I}_{[0,T]}$ and $w_t = v_t = 0$ for $t\in\mathbb{I}_{\geq T+1}$.
We compare the solution of the optimal estimation problem $P_T(d_{0:T})$ employing the quadratic stage and terminal costs~\eqref{eq:stage_cost} and~\eqref{eq:term_cost} using $Q=R=G=1$ with the solution of the truncated problem $P_N(d_{\tau:\tau+N})$ for different values of $N$ and~$\tau$.

Figure~\ref{fig:OE} shows the difference between $x^*_j$ and $\hat{x}^*_j$ over $j\in\mathbb{I}_{[\tau,\tau+N]}$. 
Here, we find that the sequence $\hat{x}^*_{\tau:\tau+N}$ generally consists of three pieces: first, a transient at the beginning where $\hat{x}^*_j$ converges to ${x}^*_j$ (also called the \emph{approaching arc}); second, a large middle phase where $\hat{x}^*_j$ stays near ${x}^*_j$; third, a transient at the end where $\hat{x}^*_j$ diverges from ${x}^*_j$ (also called the \emph{leaving arc}).
Figure~\ref{fig:OE} indicates that these transients are independent of $N$.
For the special cases at the boundaries ($\tau=0$ and $\tau=T-N$), we observe one-sided transients; here, we refer to Remark~\ref{rem:motivation_TP} below for a detailed discussion.

Overall, it can be concluded that, when $N$ is large, the solution $\hat{x}^*_{j}$ remains close to the optimal solution ${x}^*_{j}$ for most of the time. This is in fact the key characteristic of any turnpike definition appearing in the \mbox{literature,~e.g.,~\cite{Gruene2016b,Faulwasser2022,Gruene2019,Gruene2016a,Trelat2023}.}

\subsection{Turnpike under a decaying sensitivity condition}\label{sec:turnpike_sensitivity}
Decaying sensitivity is a quite natural property of a parametric nonlinear program that characterizes how much perturbations in the data at one stage influence the corresponding solution at another stage, cf.~\cite{Na2020,Shin2022}.
In this section, we use this concept to derive a sufficient condition for the occurrence of turnpike behavior as observed in the preceding motivating example.

To this end, consider the auxiliary nonlinear program $\bar{P}(\bar{d}_{0:N},x^\mathrm{i},x^\mathrm{t})$ parameterized by data $(\bar{d}_{0:N},x^\mathrm{i},x^\mathrm{t})$, where $\bar{d}_j=(\bar{u}_j,\bar{y}_j)\in\mathcal{U}\times\mathcal{Y}$, $j\in\mathbb{I}_{[0,N]}$:
\begin{subequations}\label{eq:NLP}
	\begin{align}
		&\min_{\bar{x}_{0:N},\, \bar{w}_{0:N-1}} \ \sum_{j=0}^{N-1} l(\bar{x}_j,\bar{w}_j;\bar{d}_j)\\
	\text{s.t.} \ \ 
		&\bar{x}_0 = x^\mathrm{i}, \label{eq:MHE_con_init}\\
		&\bar{x}_{N} = x^\mathrm{t},\label{eq:MHE_con_term}\\
		&\bar{x}_{j+1} = f(\bar{x}_j,\bar{u}_j,\bar{w}_j),\  j\in\mathbb{I}_{[0,N-1]},\\
		&\bar{x}_{j}\in\mathcal{X}, \ \bar{y}_{j}-h(\bar{x}_{j},\bar{u}_j)\in\mathcal{V},\ j\in\mathbb{I}_{[0,N]},\\
		&\bar{w}_{j}\in\mathcal{W},\ j\in\mathbb{I}_{[0,N-1]}.\label{eq:MHE_con_2}
	\end{align}
\end{subequations}
Let $\bar{z}_j := (\bar{x}_j,\bar{w}_j)$, $j\in\mathbb{I}_{[0,N-1]}$, $\bar{z}_N := (\bar{x}_N,0)$.
We denote solutions to \eqref{eq:NLP} by $\bar{z}^*_j = \bar{\zeta}_N(j,\bar{d}_{0:N},x^\mathrm{i},x^\mathrm{t})$, $j\in\mathbb{I}_{[0,N]}$.

\begin{lem}\label{lem:optimality}
	Any minimizer of ${P}(d_{0:T})$ is a minimizer of $\bar{P}(d_{\tau:\tau+N},x^*_\tau,x^*_{\tau+N})$ for any $N\in\mathbb{I}_{[0,T]}$, $\tau\in\mathbb{I}_{[0,T-N]}$.
\end{lem}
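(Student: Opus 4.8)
The plan is to prove Lemma~\ref{lem:optimality} by the dynamic programming principle of optimality, realized through a cut-and-paste (splicing) construction together with a proof by contradiction. The key structural observation is that the auxiliary problem $\bar{P}(d_{\tau:\tau+N},x^*_\tau,x^*_{\tau+N})$ pins the initial and terminal states of the window exactly to the values $x^*_\tau$ and $x^*_{\tau+N}$ taken by the global FIE minimizer $z^*_{0:T}$. Hence the restriction of $z^*$ to the window $\mathbb{I}_{[\tau,\tau+N]}$ is feasible for $\bar{P}$, and it only remains to show that it is cost-minimal there.

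First I would fix $N$, $\tau$ and consider the restriction $z^*_{\tau:\tau+N}$. Since $z^*$ is feasible for $P(d_{0:T})$, it satisfies the dynamics~\eqref{eq:MHE_f} and the path constraints~\eqref{eq:MHE_x}--\eqref{eq:MHE_w} on $\mathbb{I}_{[\tau,\tau+N]}$, and by construction $x^*_\tau=x^\mathrm{i}$, $x^*_{\tau+N}=x^\mathrm{t}$; thus $z^*_{\tau:\tau+N}$ is feasible for $\bar{P}$. I would then note that the terminal cost $g(\bar{x}_N;\bar{d}_N)=g(x^\mathrm{t};d_{\tau+N})$ is constant over the feasible set of $\bar{P}$, because the terminal state is fixed by~\eqref{eq:MHE_con_term}. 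Consequently a trajectory minimizes $\bar{P}$ if and only if it minimizes $\sum_{j=\tau}^{\tau+N-1}l(\bar{x}_{j-\tau},\bar{w}_{j-\tau};d_j)$ subject to the same constraints, so the terminal cost plays no role in selecting the minimizer and can be carried along harmlessly.

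For the contradiction, suppose $z^*_{\tau:\tau+N}$ is \emph{not} optimal for $\bar{P}$. Then there is a feasible $\bar{z}_{0:N}$ with strictly smaller window cost, satisfying $\bar{x}_0=x^*_\tau$ and $\bar{x}_N=x^*_{\tau+N}$. I would splice it into the global trajectory by setting $\tilde{z}_j=z^*_j$ for $j\in\mathbb{I}_{[0,\tau-1]}\cup\mathbb{I}_{[\tau+N,T]}$ and $\tilde{z}_j=\bar{z}_{j-\tau}$ for $j\in\mathbb{I}_{[\tau,\tau+N-1]}$. The crucial step is verifying that $\tilde{z}_{0:T}$ is feasible for $P(d_{0:T})$: the path constraints hold pointwise since each piece is feasible, and the dynamics hold within each piece; at the two junctions $j=\tau$ and $j=\tau+N$ the dynamics hold precisely because the matching endpoint constraints $\bar{x}_0=x^*_\tau$ and $\bar{x}_N=x^*_{\tau+N}$ guarantee $\tilde{x}_\tau=f(\tilde{x}_{\tau-1},u_{\tau-1},\tilde{w}_{\tau-1})$ and $\tilde{x}_{\tau+N}=f(\tilde{x}_{\tau+N-1},u_{\tau+N-1},\tilde{w}_{\tau+N-1})$. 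Since the head and tail contributions to $J_T$ (including the terminal cost $g(x^*_T;d_T)$, as $\tilde{x}_T=x^*_T$) are identical for $\tilde{z}$ and $z^*$, while the window contribution is strictly smaller, I obtain $J_T(\tilde{z}_{0:T};d_{0:T})<J_T(z^*_{0:T};d_{0:T})=V_T(d_{0:T})$, contradicting optimality of $z^*$ for $P(d_{0:T})$.

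I expect the only genuinely delicate point to be this feasibility check at the two splice junctions, as it is exactly where the fixed-endpoint structure of $\bar{P}$ enters; everything else (the decomposition of the cost into head/window/tail, the pointwise preservation of the path constraints, and the irrelevance of the constant terminal cost) is routine bookkeeping once the boundary states are matched. A minor care is the boundary cases $\tau=0$ and $\tau+N=T$, where one of the two junctions is absent and the corresponding argument is simply omitted.
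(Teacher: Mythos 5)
Your proposal is correct and follows essentially the same route as the paper's own proof: both argue by contradiction via the principle of optimality, splicing a hypothetically better window trajectory into the global minimizer $z^*_{0:T}$ and using the fixed endpoint constraints \eqref{eq:MHE_con_init} and \eqref{eq:MHE_con_term} to guarantee feasibility of the spliced candidate and cancellation of the terminal cost. Your explicit remark that $g$ is constant on the feasible set of $\bar{P}$, and your treatment of the junction dynamics and the boundary cases $\tau=0$, $\tau+N=T$, are just slightly more detailed bookkeeping of the same argument.
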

\begin{proof}
	The proof relies on the principle of optimality.
	First, note that $\bar{P}(d_{\tau:\tau+N},x^*_\tau,x^*_{\tau+N})$ is feasible because the candidate solution $z^*_{\tau:\tau+N}$ satisfies the constraints \eqref{eq:MHE_con_init}--\eqref{eq:MHE_con_2} with $\bar{d}_{0:N}=d_{\tau:\tau+N}$.	
	Let $\bar{z}^*_{\tau+j} := \bar{\zeta}_N(j,d_{\tau:\tau+N},x^*_\tau,x^*_{\tau+N})$. Suppose the claim is false. Then, it must hold that
	\begin{equation*}
		\sum_{j=\tau}^{\tau+N-1}l(z_j^*;d_j)>\sum_{j=\tau}^{\tau+N-1}l(\bar{z}^*_j;d_{j}).
	\end{equation*}
	To both sides of this inequality, we add $\sum_{j=0}^{\tau-1}l(z_j^*;d_j) + \sum_{j=\tau+N}^{T-1}l(z_j^*;d_j) + g(x^*_T;d_T),$ which yields
	\begin{align}
		V_T(d_{0:T})>& \sum_{j=0}^{\tau-1}l(z_j^*;d_j) + \hspace{-0.5ex} \sum_{j=\tau}^{\tau+N-1}\hspace{-1ex}l(\bar{z}^*_j;d_{j}) + \hspace{-0.5ex} \sum_{j=\tau+N}^{T-1}\hspace{-0.7ex}l(z_j^*;d_j) \nonumber\\
		& + g(x^*_T;d_T). \label{eq:proof_VT}
	\end{align}
	Now, consider the sequence $\xi_{0:T}$ defined as
	\begin{equation}
		\xi_j = (\chi_j,\omega_j) := 
		\begin{cases}
			{z}^*_j, & j\in\mathbb{I}_{[0,\tau-1]}\\
			\bar{z}^*_j, & j\in\mathbb{I}_{[\tau,\tau+N-1]}\\
			{z}^*_j, & j\in\mathbb{I}_{[\tau+N,T]}. \label{eq:proof_xi}
		\end{cases}
	\end{equation}
	Then, \eqref{eq:proof_VT} can be re-written as
	\begin{equation}
		V_T(d_{0:T}) > \sum_{j=0}^{T-1}l(\xi_j;d_j) + g(\chi_T;d_T). \label{eq:proof_VT2}
	\end{equation}
	Note that $\xi_{0:T}$ as defined by~\eqref{eq:proof_xi} satisfies~\eqref{eq:MHE_f}--\eqref{eq:MHE_w} for all $j\in\mathbb{I}_{[0,T]}$ due to~\eqref{eq:MHE_con_init} and \eqref{eq:MHE_con_term}. However, \eqref{eq:proof_VT2} contradicts minimality of $V_T(d_{0:T})$, which hence finishes this proof.
\end{proof}

We make the following assumption.
\begin{ass}[Decaying sensitivity]\label{ass:decay}
	There exists $\beta\in\mathcal{KL}$ such that for any $x^\mathrm{i}_1,x^\mathrm{i}_2\in\mathcal{X}$ and $x^\mathrm{t}_1,x^\mathrm{t}_2\in\mathcal{X}$ and any data $d_{\tau:\tau+N}$ for which~\eqref{eq:NLP} is feasible, it holds that
		\begin{align}
			&|\bar{\zeta}_N(j,d_{\tau:\tau+N},x^\mathrm{i}_1,x^\mathrm{t}_1)-\bar{\zeta}_N(j,d_{\tau:\tau+N},x^\mathrm{i}_2,x^\mathrm{t}_2)|\nonumber \\
			&\leq \beta(|x^\mathrm{i}_1-x^\mathrm{i}_2|,{j}) + \beta(|x^\mathrm{t}_1-x^\mathrm{t}_2|,{N-j})\label{eq:decay}
		\end{align}
		for all $j\in\mathbb{I}_{[0,N]}$, $N\in\mathbb{I}_{[0,T]}$, and $\tau\in\mathbb{I}_{[0,T-N]}$.
\end{ass}

For linear systems and quadratic cost functions as in~\eqref{eq:stage_cost} and~\eqref{eq:term_cost}, one can establish Assumption~\ref{ass:decay} (with the $\mathcal{KL}$-function $\beta$ specializing to an exponential one) under observability and controllability (the latter one with respect to the disturbance input) by suitably adapting \cite[Th.~5]{Ferrante2005}, compare also \cite{Gruene2020}.
This can conceptually be transferred to nonlinear systems under (strong) regularity conditions on the solution, cf.~\cite[Th.~7, Sec.~2.2]{Shin2021}.
In our case, the property in~\eqref{eq:decay} involves two different data sets that only differ in $x^\mathrm{i}_1,x^\mathrm{i}_2$ and $x^\mathrm{t}_1,x^\mathrm{t}_2$, but otherwise contain exactly the same data; consequently, the bound involves only these two terms.

\begin{thm}[Decaying sensitivity implies turnpike]\label{thm:turnpike}
	Let $\mathcal{X}$ be compact and suppose that Assumption~\ref{ass:decay} is satisfied.
	Consider the optimal FIE solution ${z}^*_{j} =  \zeta_T(j,d_{0:T})$, $j\in\mathbb{I}_{[0,T]}$ with some given $d_{0:T}$ and $T\in\mathbb{I}_{\geq0}$.
	Then, there exists $C>0$ such that the solution $\hat{z}^*_{\tau+j} = \zeta_N(j,d_{\tau:\tau+N})$ satisfies
	\begin{align}\label{eq:turnpike}
		|\hat{z}_{\tau+j}^*-z_{\tau+j}^*|\leq \beta(C,j) + \beta(C,N-j)
	\end{align}
	for all $j\in\mathbb{I}_{[0,N]}$, $\tau\in\mathbb{I}_{[0,T-N]}$, $N\in\mathbb{I}_{[0,T]}$, and $T\in\mathbb{I}_{\geq0}$, and all possible data $d_{0:T}$.		
\end{thm}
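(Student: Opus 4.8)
The plan is to express both the truncated minimizer $\hat z^*_{\tau:\tau+N}$ and the full-information minimizer $z^*_{\tau:\tau+N}$ (restricted to the window $\mathbb{I}_{[\tau,\tau+N]}$) as minimizers of the \emph{same} auxiliary two-point problem $\bar P(d_{\tau:\tau+N},\cdot,\cdot)$ from~\eqref{eq:NLP}, differing only in the prescribed initial and terminal states $x^\mathrm{i},x^\mathrm{t}$. Once this representation is in place, the decaying-sensitivity estimate of Assumption~\ref{ass:decay} applies directly to the difference of the two, and it only remains to bound the endpoint mismatch by a constant using compactness of $\mathcal{X}$.

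First I would treat the full-information side, which is already handled by Lemma~\ref{lem:optimality}: the FIE minimizer restricted to the window is a minimizer of $\bar P(d_{\tau:\tau+N},x^*_\tau,x^*_{\tau+N})$, so that $z^*_{\tau+j}=\bar\zeta_N(j,d_{\tau:\tau+N},x^*_\tau,x^*_{\tau+N})$ for all $j\in\mathbb{I}_{[0,N]}$.

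Next I would treat the truncated side by the complementary observation that the truncated problem $P_N(d_{\tau:\tau+N})$ is exactly the auxiliary problem $\bar P(d_{\tau:\tau+N},\cdot,\cdot)$ with the boundary constraints~\eqref{eq:MHE_con_init}--\eqref{eq:MHE_con_term} removed; in particular the two problems share the identical stage cost, terminal cost $g$, and window constraints. Prescribing $x^\mathrm{i}=\hat x^*_\tau$ and $x^\mathrm{t}=\hat x^*_{\tau+N}$ (the endpoints of the truncated optimizer) therefore adds only constraints that $\hat z^*_{\tau:\tau+N}$ already satisfies. Since $\hat z^*_{\tau:\tau+N}$ minimizes the cost over the \emph{larger} feasible set of $P_N$ and still lies in the \emph{smaller} feasible set of $\bar P(d_{\tau:\tau+N},\hat x^*_\tau,\hat x^*_{\tau+N})$, it remains a minimizer over the smaller set, giving $\hat z^*_{\tau+j}=\bar\zeta_N(j,d_{\tau:\tau+N},\hat x^*_\tau,\hat x^*_{\tau+N})$ for all $j\in\mathbb{I}_{[0,N]}$.

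Finally I would combine the two representations. Invoking Assumption~\ref{ass:decay} with $(x^\mathrm{i}_1,x^\mathrm{t}_1)=(\hat x^*_\tau,\hat x^*_{\tau+N})$ and $(x^\mathrm{i}_2,x^\mathrm{t}_2)=(x^*_\tau,x^*_{\tau+N})$ yields
\[
	|\hat z^*_{\tau+j}-z^*_{\tau+j}|\leq \beta(|\hat x^*_\tau-x^*_\tau|,j)+\beta(|\hat x^*_{\tau+N}-x^*_{\tau+N}|,N-j).
\]
As all states are feasible and hence contained in the compact set $\mathcal{X}$, setting $C:=\max_{x_1,x_2\in\mathcal{X}}|x_1-x_2|$ bounds both endpoint differences uniformly in $j,\tau,N,T$, and monotonicity of $\beta(\cdot,s)\in\mathcal{K}$ in its first argument delivers~\eqref{eq:turnpike}. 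I expect the only genuine obstacle to be the endpoint-identification step for the truncated problem: one must verify carefully that $P_N$ and $\bar P$ coincide apart from the two boundary constraints — in particular that the general terminal cost $g$ retained throughout Section~\ref{sec:turnpike_sensitivity} is the same in both — so that appending the (already active) endpoint constraints provably leaves the minimizer unchanged.
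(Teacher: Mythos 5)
Your proposal is correct and follows essentially the same route as the paper's own proof: represent both $z^*_{\tau:\tau+N}$ (via Lemma~\ref{lem:optimality}) and $\hat z^*_{\tau:\tau+N}$ as minimizers of the same two-point problem $\bar P(d_{\tau:\tau+N},\cdot,\cdot)$ with different endpoint data, apply Assumption~\ref{ass:decay}, and bound the endpoint mismatches by a constant $C$ using compactness of $\mathcal{X}$. The only difference is that you spell out the restriction argument for the truncated side (the optimizer of $P_N$ lies in, and hence minimizes over, the smaller feasible set obtained by appending its own endpoints as constraints, with identical stage and terminal costs), a step the paper simply asserts as fact.
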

\begin{proof}
	Theorem~\ref{thm:turnpike} is a direct consequence of the fact that $\hat{z}_{\tau:\tau+N}^*$ is a minimizer of $\bar{P}(d_{\tau:\tau+N},\hat{x}^*_\tau,\hat{x}^*_{\tau+N})$, and $z_{\tau:\tau+N}^*$ of $\bar{P}(d_{\tau:\tau+N},x^*_{\tau},x^*_{\tau+N})$ (by Lemma~\ref{lem:optimality}). The application of \eqref{eq:decay} in Assumption~\ref{ass:decay} then leads to $|\hat{z}_{\tau+j}^*-z_{\tau+j}^*|\leq \beta(|\hat{x}^*_{\tau}-x^*_{\tau}|,j) + \beta(|\hat{x}^*_{\tau+N}-x^*_{\tau+N}|,N-j)$ for all $j\in\mathbb{I}_{[0,N]}$ and $\tau\in\mathbb{I}_{[0,T-N]}$. By compactness of $\mathcal{X}$, there exists $C>0$ such that $|x_1-x_2|\leq C$ for all $x_1,x_2\in\mathcal{X}$. In combination, we obtain~\eqref{eq:turnpike}, which finishes this proof.
\end{proof}

The two terms on the right-hand side in~\eqref{eq:turnpike} capture the occurrence of approaching and leaving arcs. Intuitively, these reflect the fact that the truncated problem $P_N$ considers only a segment of the full data batch, neglecting both the past data $d_{0:\tau-1}$ and the future data $d_{\tau+N+1:T}$.
	
\begin{rem}[Turnpike at the boundaries]\label{rem:boundary}
	For the cases where $\tau=0$ and $\tau=T-N$, the problems $P_N$ and $P_T$ (in particular, their data) actually half-side coincide.
	Considering modified problems $\bar{P}$ with either the initial or terminal constraint being enforced, we can theoretically refine the behavior at the left and right boundaries; namely, for $\tau=0$ we can derive that $|\hat{z}_{j}^*-z_{j}^*|\leq \beta(C,N-j),\ j\in\mathbb{I}_{[0,N]}$, and for $\tau=T-N$ that \mbox{$|\hat{z}_{T-N+j}^*-z_{T-N+j}^*|\leq \beta(C,j),\ j\in\mathbb{I}_{[0,N]}$}.
\end{rem}

\begin{rem}[Motivating example revisited]\label{rem:motivation_TP}
	As the system in the motivating example in Section~\ref{sec:mot_example} is linear and trivially observable and controllable, it satisfies Assumption~\ref{ass:decay}; hence, Theorem~\ref{thm:turnpike} applies.
	This fact is particularly evident in the middle plot in Figure~\ref{fig:OE}, which shows clear approaching and leaving arcs.
	Moreover, the left and right plots in Figure~\ref{fig:OE} precisely match the behavior described in Remark~\ref{rem:boundary}, exhibiting only either a leaving arc or an approaching arc for the special cases where $\tau=0$ and~$\tau=T-N$.
\end{rem}

\begin{rem}[Turnpike via strict dissipativity]
	Turnpike arguments are in fact standard in the context of (receding horizon) optimal control, usually closely connected to the concept of strict dissipativity, see, e.g., \cite{Faulwasser2018,Gruene2019,Koehler2019}.
	In fact, we can adapt the standard result that ``strict dissipativity implies turnpike'' (e.g., \cite[Th.~1]{Gruene2018}) to the more general setting of optimal state estimation.
	Here, we can even avoid the need for an additional reachability property to bound the value function, which is due to the conceptual fact that the initial state is free in the estimation problem and not fixed as in control.
	The resulting turnpike property, however, can merely be characterized by a bound on the number of elements of the sequence $\hat{z}^*_{\tau:\tau+N}$ that lie outside an $\epsilon$-neighborhood of the turnpike $z^*_{0:T}$, cf.~\cite{Gruene2016a,Gruene2018}.
	This \emph{measure turnpike} property is conceptually weaker compared to the \emph{$\mathcal{KL}$-turnpike} property in~\eqref{eq:turnpike}, in the sense that it is not possible to infer \emph{which} elements of the solution $\hat{z}^*_{\tau:\tau+N}$ are actually close to the turnpike, and which are not.
	Because this additional information is crucially required in the context of state estimation (as becomes obvious in the following section), we focus on the $\mathcal{KL}$-turnpike characterization as provided by Theorem~\ref{thm:turnpike}.
\end{rem}

\section{Performance of optimal state estimation}\label{sec:performance_sec}
In this section, we show how the FIE problem $P_T$ can be suitably approximated by a sequence of truncated problems $P_N$, and that this approximation is in fact approximately optimal with respect to the full problem (Section~\ref{sec:performance}). Furthermore, we discuss implications for the practically relevant case of MHE for online state estimation (Section~\ref{sec:performance_MHE}).
The key condition for the results in this section is a turnpike property of the following form.

\begin{ass}[Turnpike behavior]\label{ass:turnpike}
	There exists $\beta\in\mathcal{KL}$ and $C>0$ such that for all $T\in\mathbb{I}_{\geq0}$ and $N\in\mathbb{I}_{[0,T]}$, the solutions of the problems $P_T(d_{0:T})$ and $P_N(d_{\tau:\tau+N})$ satisfy
	\begin{equation*}
		|\hat{z}_{\tau+j}^*-z_{\tau+j}^*| \leq
		\begin{cases}
			\beta(C,N-j), & \tau = 0\\
			\beta(C,j) + \beta(C,N-j), & \tau \in \mathbb{I}_{[1,T-N-1]}\\
			\beta(C,j), & \tau = T-N
		\end{cases}
	\end{equation*}
	for all $j\in\mathbb{I}_{[0,N]}$, $\tau\in\mathbb{I}_{[0,T-N]}$, and all possible data $d_{0:T}$.
\end{ass}

Note that Assumption~\ref{ass:turnpike} is satisfied if the optimal state estimation problem exhibits a decaying sensitivity property, see Theorem~\ref{thm:turnpike} and Remark~\ref{rem:boundary}.

\subsection{Approximation properties and performance guarantees}\label{sec:performance}
We aim to approximate the FIE solution $z_{0:T}^*$ using a sequence of truncated problems $P_N$ of length $N\in\mathbb{I}_{\geq0}$ (where for simplicity we assume that $N$ is an even number).
Since the individual truncated problems are completely decoupled from each other, computing this approximation can be parallelized and hence has the potential to significantly save time and resources.
Moreover, we show how a careful concatenation of the solutions to the truncated problems leads to a sequence that is approximately optimal with respect to the (unknown) optimal FIE solution, with error terms that can be made arbitrarily small.
This is very interesting from a theoretical point of view, but also practically relevant in, e.g., large data assimilation problems that appear in geophysics and environmental sciences, see, e.g.,~\cite{Asch2016,Carrassi2018}.

Now, consider a batch of measured input-output data $d_{0:T}$ and the solution $z_{0:T}^*$ of the associated FIE problem $P_T(d_{0:T})$. To approximate the corresponding optimal state trajectory $x_{0:T}^*$, we define the \emph{approximate estimator}
\begin{equation}
	\hat{x}^\mathrm{ae}_j =
	\begin{cases}
		\zeta_N^{x}(j,d_{0:N}), & j \in\mathbb{I}_{[0,N/2]}\\
		\zeta_N^{x}(N/2,d_{j-N/2:j+N/2}), & j\in\mathbb{I}_{[N/2+1,T-N/2-1]}\\
		\zeta_N^{x}(N-T+j,d_{T-N:T}), & j \in\mathbb{I}_{[T-N/2,T]},
	\end{cases}\label{eq:candidate_x}
\end{equation}
where $\zeta_N^x$ is defined below~\eqref{eq:z_N}.
This specific construction ensures that $\hat{x}^\mathrm{ae}_j$ lies in a neighborhood of $x^*_j$ for all $j\in\mathbb{I}_{[0,T]}$, which is shown in the following result.

\begin{prop}\label{prop:candi}
	Suppose that Assumption~\ref{ass:turnpike} holds. Then, the state sequence~$\hat{x}^\mathrm{ae}_{0:T}$ satisfies $|\hat{x}^\mathrm{ae}_j - x^*_j| \leq  2\beta(C,N/2)$ for all $j\in\mathbb{I}_{[0,T]}$ and all possible data $d_{0:T}$.
\end{prop}

\begin{proof}
	The sequence $\hat{x}^\mathrm{ae}_{0:T}$ as defined by \eqref{eq:candidate_x} is essentially constructed from three parts, where for each part we can exploit the turnpike property from Assumption~\ref{ass:turnpike}.
	In particular, for $j\in\mathbb{I}_{[0,N/2]}$, Assumption~\ref{ass:turnpike} with $\tau=0$ yields
	\begin{equation*}
		|\hat{x}^\mathrm{ae}_j - x^*_j| \leq \beta(C,N-j) \leq \beta(C,N/2), \ j\in\mathbb{I}_{[0,N/2]}.
	\end{equation*}	
	For $j\in\mathbb{I}_{[N/2+1,T-N/2-1]}$, we can use Assumption~\ref{ass:turnpike} with $\tau\in\mathbb{I}_{[1,T-N-1]}$:
	\begin{align*}
		|\hat{x}^\mathrm{ae}_j - x^*_j|&\leq  2\beta(C,N/2), \ j\in\mathbb{I}_{[N/2+1,T-N/2-1]}.
	\end{align*}
	Finally, for $j\in\mathbb{I}_{[T-N/2,T]}$, we employ Assumption~\ref{ass:turnpike} with $\tau=T-N$:
	\begin{equation*}
		|\hat{x}^\mathrm{ae}_j - x^*_j| \leq \beta(C,N-T+j) \leq \beta(C,N/2), \ j\in\mathbb{I}_{[T-N/2,T]}.
	\end{equation*}
	Combining these three cases yields the desired result.
\end{proof}

In the following, we consider the special case where the dynamics~\eqref{eq:sys_1} are subject to additive disturbances:
\begin{equation}
	f(x,u,w) = f_\mathrm{a}(x,u) + w.\label{eq:dynamics}
\end{equation}
Here, we further impose a Lipschitz condition on $f_\mathrm{a}$.

\begin{ass}\label{ass:Lipschitz}
	The function $f_\mathrm{a}$ is Lipschitz in $x\in\mathcal{X}$ uniformly in $u\in\mathcal{U}$, i.e., there exists a constant $L_f>0$ such that $|f_\mathrm{a}(x_1,u)-f_\mathrm{a}(x_2,u)| \leq L_f|x_1-x_2|$ for all $x_1,x_2\in\mathcal{X}$ uniformly for all $u\in\mathcal{U}$.
\end{ass}

Note that Assumption~\ref{ass:Lipschitz} is not restrictive under compactness of $\mathcal{X}$ (which we generally require in our results) and compactness of $\mathcal{U}$.
Under the dynamics~\eqref{eq:dynamics}, the sequence~$\hat{x}^\mathrm{ae}_{0:T}$ represents a feasible state trajectory, where the corresponding disturbance inputs are given by
\begin{equation}
	\hat{w}_j^\mathrm{ae} = \hat{x}^\mathrm{ae}_{j+1}-f_\mathrm{a}(\hat{x}^\mathrm{ae}_{j},u_j),\  j\in\mathbb{I}_{[0,T-1]}, \label{eq:candidate_w}
\end{equation}
provided that $\mathcal{W}$ is such that $\hat{w}_j^\mathrm{ae}\in\mathcal{W}$ for all $j\in\mathbb{I}_{[0,T-1]}$.
Define $\hat{z}^\mathrm{ae}_j := (\hat{x}^\mathrm{ae}_j,\hat{w}^\mathrm{ae}_j)$, $j\in\mathbb{I}_{[0,T-1]}$, and $\hat{z}^\mathrm{ae}_T := (\hat{x}^\mathrm{ae}_T,0)$.

We further require a Lipschitz property of the functions $l$ and~$g$ used in the cost function~\eqref{eq:MHE_cost}.

\begin{ass}\label{ass:Lipschitz_gh}
	The functions $l$ and $g$ are Lipschitz on $\mathcal{X}\times\mathcal{W}$ uniformly on $\mathcal{U}\times\mathcal{Y}$, i.e., there exist constants \mbox{$L_l,L_g>0$} such that $|l(x_1,w_1;d){\,-\,}l(x_2,w_2;d)| \leq L_l|(x_1,w_1)-(x_2,w_2)|$ and $|g(x_1;d){\,-\,}g(x_2;d)| \leq L_g|x_1{\,-\,}x_2|$ for all $x_1,x_2\in\mathcal{X}$ and $w_1,w_2\in\mathcal{W}$ uniformly for all $d\in\mathcal{U}\times\mathcal{Y}$.
\end{ass}

Assumption~\ref{ass:Lipschitz_gh} can be easily satisfied in practical applications; in particular, for the common special case of quadratic cost functions as in \eqref{eq:stage_cost} and \eqref{eq:term_cost}, Assumption~\ref{ass:Lipschitz_gh} holds under compactness of $\mathcal{X}$, $\mathcal{W}$, and the domain of the data $\mathcal{U}\times\mathcal{Y}$.

We are now able to state the main result of this section.
\begin{thm}[Performance]\label{thm:perf} 
	Consider the system dynamics~\eqref{eq:dynamics} under Assumption~\ref{ass:Lipschitz} and suppose that Assumptions~\ref{ass:turnpike} and \ref{ass:Lipschitz_gh} hold.
	Then, there exist functions ${\sigma}_1,\sigma_2\in\mathcal{L}$ such that the approximate estimator $\hat{z}^\mathrm{ae}_{0:T}$ as defined via~\eqref{eq:candidate_x} and \eqref{eq:candidate_w} satisfies
	\begin{align}
		J_T(\hat{x}^\mathrm{ae}_{0:T},\hat{w}^\mathrm{ae}_{0:T-1};d_{0:T})\leq V_T(d_{0:T}) + T\sigma_1(N) + \sigma_2(N) \label{eq:thm_performance}
	\end{align}
	for any possible data $d_{0:T}$.
\end{thm}

\begin{proof}
	We start by noting that
	\begin{align}
		&J_T(\hat{x}^\mathrm{ae}_{0:T},\hat{w}^\mathrm{ae}_{0:T-1};d_{0:T})-V_T(d_{0:T})\nonumber\\
		&\leq |J_T(\hat{x}^\mathrm{ae}_{0:T},\hat{w}^\mathrm{ae}_{0:T-1};d_{0:T})-V_T(d_{0:T})|\nonumber\\
		&\leq \sum_{j=0}^{T-1}L_l|(\hat{x}^\mathrm{ae}_{j},\hat{w}^\mathrm{ae}_j)-({x}^*_{j},{w}^*_j)| + L_g|\hat{x}^\mathrm{ae}_{T}-{x}^*_{T}|,\label{eq:proof_start}
	\end{align}
	where in the last inequality we used the definition of the cost function~\eqref{eq:MHE_cost} together with the triangle inequality and Assumption~\ref{ass:Lipschitz_gh}.
	For each $j\in\mathbb{I}_{[0,T-1]}$, again by the triangle inequality it follows that
	\begin{equation}
		|(\hat{x}^\mathrm{ae}_{j},\hat{w}^\mathrm{ae}_j)-({x}^*_{j},{w}^*_j)|
		\leq |\hat{x}^\mathrm{ae}_{j}-{x}^*_{j}| + |\hat{w}^\mathrm{ae}_j - {w}^*_j|. \label{eq:proof_A1}
	\end{equation}
	Using the dynamics~\eqref{eq:dynamics} and Assumption~\ref{ass:Lipschitz} leads to
	\begin{equation}
		|\hat{w}^\mathrm{ae}_j - {w}^*_j| \leq |\hat{x}^\mathrm{ae}_{j+1}  - {x}^*_{j+1}| + L_f|\hat{x}^\mathrm{ae}_{j}- {x}^*_{j}|.\label{eq:proof_A2}
	\end{equation}
	From Proposition~\ref{prop:candi}, we further obtain
	\begin{equation}
		|\hat{x}^\mathrm{ae}_j - x^*_j| \leq |\hat{z}^\mathrm{ae}_j - z^*_j| \leq 2\beta(C,N/2) =: \sigma(N)\label{eq:proof_A3}
	\end{equation}
	for all $j\in\mathbb{I}_{[0,T]}$, where $\sigma\in\mathcal{L}$.
	Combining~\eqref{eq:proof_start}--\eqref{eq:proof_A3} yields
	\begin{align*}
		&J_T(\hat{x}^\mathrm{ae}_{0:T},\hat{w}^\mathrm{ae}_{0:T-1};d_{0:T})-V_T(d_{0:T})\\
		&\leq T(2+L_f)L_l\sigma(N) + L_g\sigma(N).
	\end{align*}
	The definitions $\sigma_1(s):=(2+L_f)L_l\sigma(s)$ and $\sigma_2(s):=L_g\sigma(s)$ for $s\geq0$ establish~\eqref{eq:thm_performance}; noting that the functions $\sigma_1$ and $\sigma_2$ are of class $\mathcal{L}$ concludes this proof.
\end{proof}

Some remarks are in order.

\begin{rem}[Performance estimate] \label{rem:imterpretation}
		 The performance estimate in~\eqref{eq:thm_performance} implies that the approximate estimator $\hat{z}^\mathrm{ae}_{0:T}$ is approximately optimal on $\mathbb{I}_{[0,T]}$ with error terms that depend on the choices of $N$ and $T$.
		 Moreover, in case of an \emph{exponential turnpike} property (i.e., Assumption~\ref{ass:turnpike} holds with $\beta(s,t)=ce^{-kt}$ for some $c,k>0$), the $\mathcal{L}$-functions $\sigma_{1}$ and $\sigma_{2}$ in~\eqref{eq:thm_performance} also decay exponentially. Then, the performance $J_T$ converges \emph{exponentially} to the optimal performance $V_T$ as $N$ increases. This behavior is also evident in the numerical example in Section~\ref{sec:example}.
		 Finally, note that the first error term in~\eqref{eq:thm_performance} grows linearly with $T$ and tends to infinity if $T$ approaches infinity. This property is to be expected (due to the fact that the turnpike is never exactly reached) and conceptually similar to (non-averaged) performance results in economic model predictive control, cf. \cite[Sec.~5]{Faulwasser2018}, \cite{Gruene2016b}.
\end{rem}

To make meaningful statements in the asymptotic case where $T\rightarrow\infty$, we analyze the averaged performance of $\hat{z}^{\mathrm{ae}}_{0:T}$ in the following corollary of Theorem~\ref{thm:perf}.

\begin{cor}[Averaged performance]\label{cor:performance}
	Assume that the conditions of Theorem~\ref{thm:perf} are satisfied.
	Then, the approximate estimator $\hat{z}^\mathrm{ae}_{0:T}$ satisfies the averaged performance estimate
	\begin{align*}
		&\limsup\limits_{T\rightarrow\infty}\frac{1}{T}J_T(\hat{x}^\mathrm{ae}_{0:T},\hat{w}^\mathrm{ae}_{0:T-1};d_{0:T})\\[-0.2ex]
		&\leq \limsup\limits_{T\rightarrow\infty}\frac{1}{T}V_T(d_{0:T}) + \sigma_1(N)
	\end{align*}
	for all possible data $d_{0:T}$.
\end{cor}

From Corollary~\ref{cor:performance}, it follows that the averaged performance of $\hat{z}^\mathrm{ae}_{0:T}$ is approximately optimal, where the error term can be made arbitrarily small by a suitable choice of $N$.

\subsection{Implications for moving horizon estimation}\label{sec:performance_MHE}
In online state estimation, the variable $T$ represents the forward (online) time that continuously grows (in the following, we use lowercase $t$ to indicate this case).
Then, at each time instant $t\in\mathbb{I}_{\geq0}$, one is interested in obtaining an accurate estimate of the current true unknown state $x_t$.
Obviously, solving $P_t(d_{0:t})$ for the desired FIE solution $x^*_t$ is generally infeasible in practice since the problem size also continuously grows with time.
Instead, MHE considers the truncated optimal estimation problem $P_N(d_{t-N:t})$ using the most recent data $d_{t-N:t}$ only, where the horizon length $N\in\mathbb{I}_{\geq0}$ is fixed.
More precisely, the MHE estimate at the current time $t\in\mathbb{I}_{\geq0}$ is given by
\begin{equation}
	\hat{x}_{t}^{\mathrm{mhe}} =
	\begin{cases}
		\zeta_t^{x}(t,d_{0:t}), & t \in\mathbb{I}_{[0,N-1]}\\
		\zeta_N^{x}(N,d_{t-N:t}), & t\in\mathbb{I}_{\geq N}.
	\end{cases}\label{eq:MHE_seq_x}
\end{equation}
From our results in Section~\ref{sec:performance}, we know that $\hat{x}_t^\mathrm{mhe}$ is actually the endpoint of some trajectory that is approximately optimal on $\mathbb{I}_{[0,t]}$ (namely, the sequence $\hat{x}^\mathrm{ae}_{0:t}$ from~\eqref{eq:candidate_x}), and hence indeed constitutes a meaningful estimate that is expected to be close to the desired (unknown) FIE solution ${x}_{t}^* = \zeta_t(t,d_{0:t})$ if $N$ is sufficiently large.
In particular, under the turnpike property from Assumption~\ref{ass:turnpike}, we can explicitly bound their difference by
\begin{equation*}
	|\hat{x}_t^{\mathrm{mhe}}-x^*_t| = |\zeta_N(N,d_{t-N:t})-\zeta_t(t,d_{0:t})| \leq \beta(C,N)
\end{equation*}
for all $t\in\mathbb{I}_{\geq N}$, which in fact can be made arbitrarily small by a suitable choice of~$N$ (note that for $t\in\mathbb{I}_{[0,N-1]}$, the FIE and MHE solutions coincide anyway).

However, we also want to emphasize that the MHE sequence $\hat{x}^{\mathrm{mhe}}_{0:t}$ defined by~\eqref{eq:MHE_seq_x} is \emph{not} approximately optimal on the whole interval $\mathbb{I}_{[0,t]}$ (and therefore turns out to be rather unsuitable for offline estimation), because it is the concatenation of solutions of truncated problems that lie on the leaving arc for all $j\in\mathbb{I}_{[0,t-1]}$, which might actually be far away from the turnpike (the optimal FIE solution). This could also be observed in the following numerical example.

\section{Numerical example}\label{sec:example}

We consider the system
\begin{align*}
	x_1^+ &= x_1 + t_{\Delta}(-2k_1x_1^2+2k_2x_2) + u_1 + w_1,\\
	x_2^+ &= x_2 + t_{\Delta}(k_1x_1^2-k_2x_2) + u_2 + w_2,\\
	y &= x_1 + x_2 + v,
\end{align*}
with $k_1=0.16$, $k_2=0.0064$, and $t_{\Delta}=0.1$.
This corresponds to the batch-reactor example from~\cite[Sec.~5]{Tenny2002} under Euler discretization and with additional controls $u$, disturbances $w$, and measurement noise $v$.
We consider a given data set $d_{0:T}$ of length $T=400$, where the process started at $x_0{\,=\,}[3,0]$, was subject to uniformly distributed disturbances and noise satisfying $w \in \{w\in\mathbb{R}^2: |w_i|\leq 0.05, i=1,2\}$ and $v \in \{v\in\mathbb{R}: |v|\leq 0.5\}$, and where the input $u_j$ was used to periodically empty and refill the reactor such that $x_{j+1}{\,=\,}[3,0]^\top{+\,}w_j$ for all $j{\,=\,}50i$ with $i\in\mathbb{I}_{[1,7]}$ and $u_j=0$ for all $j\neq 50i$.
To reconstruct the unknown state trajectory $x_{0:T}$, we consider the cost function~\eqref{eq:MHE_cost}--\eqref{eq:term_cost} and select $Q=I_2$ and $R=G=1$.
In the following, we compare the optimal FIE solution $z_{0:T}^*$, the proposed approximate estimator $\hat{z}^\mathrm{ae}_{0:T}$~(see \eqref{eq:candidate_x} and~\eqref{eq:candidate_w}), and MHE $\hat{z}_{0:T}^\mathrm{mhe}$ (given by \eqref{eq:MHE_seq_x} together with the disturbance estimates $\hat{w}_j^\mathrm{mhe} = \hat{x}^\mathrm{mhe}_{j+1}-f_\mathrm{a}(\hat{x}^\mathrm{mhe}_{j},u_j)$, $j\in\mathbb{I}_{[0,T-1]}$).

\begin{figure}[t]
	\vspace{1.2ex}
	\includegraphics[width=\columnwidth]{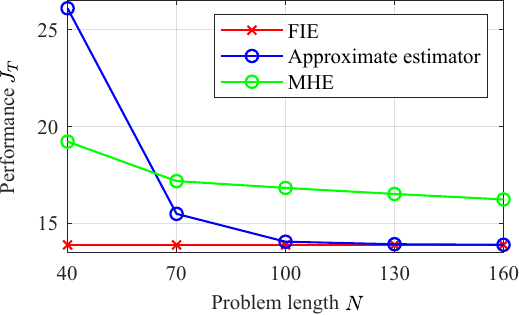}
	\caption{Performance $J_T$ of the approximate estimator $\hat{z}^\mathrm{ae}_{0:T}$ (blue) and MHE $\hat{z}^\mathrm{mhe}_{0:T}$ (green) for different lengths $N$ of the truncated problems $P_N$ compared to the optimal performance of the FIE solution $z^*_{0:T}$ (red).}
	\label{fig:J}
\end{figure}

From Figure~\ref{fig:J}, for small horizons ($N{\,=\,}40$) we observe that the approximate estimator $\hat{z}^\mathrm{ae}_{0:T}$ achieves significantly worse performance compared to FIE (and MHE).
This can be attributed to the problem length~$N$ being too small, leading to the fact that the estimates contained in $\hat{z}^\mathrm{ae}_{0:T}$ correspond to solutions of truncated problems that are far away from the turnpike (the FIE solution), compare also the motivating example in Section~\ref{sec:mot_example}, particularly Figure~\ref{fig:OE} for $N{\,=\,}5$. 
For increasing values of $N$, the estimates are getting closer to the turnpike, and the performance improves significantly.
Specifically, we see \emph{exponential} convergence to the optimal (FIE) performance $V_T$.
This could be expected since the system is exponentially detectable \cite[Sec.~V.A]{Schiller2023c} and controllable with respect to the input $w$, which suggests that Assumption~\ref{ass:turnpike} specializes to exponentially decaying sensitivity, cf.~Remark~\ref{rem:imterpretation}.
Overall, a problem length of $N=130$ is sufficient to achieve nearly optimal performance.
In contrast, the MHE sequence $\hat{z}_{0:T}^\mathrm{mhe}$ generally yields worse performance than the approximate estimator $\hat{z}^\mathrm{ae}_{0:T}$ (for $N\geq70$).
This is in line with our theory, as $\hat{z}_{0:T}^\mathrm{mhe}$ is a concatenation of solutions of truncated problems that are on the right leaving arc and hence may be far from the turnpike, cf.~Section~\ref{sec:performance_MHE}.

In practice, one is usually mainly interested in the accuracy of the state estimates with respect to the \emph{real} unknown system trajectory $x_{0:T}$. To assess this, we compare the sum of the normed errors\footnote{
	For a given sequence $\hat{x}_{0:T}$, we define $\mathrm{SNE}(\hat{x}_{0:T}) := \sum_{j=0}^{T}|\hat{x}_j-x_j|$.
} (SNE) of the FIE solution $x^*_{0:T}$, the approximate estimator $\hat{x}^\mathrm{ae}_{0:T}$, and MHE $x^\mathrm{mhe}_{0:T}$ for different choices of the horizon length $N$.
The corresponding results in Table~\ref{tab:J} show qualitatively the same behavior as in the previous performance analysis. In particular, the FIE solution yields the most accurate estimates with the lowest SNE. The proposed approximate estimator yields much higher SNE for small horizons (SNE increase of 54\,\% for $N=40$ compared to FIE), but improves very fast as $N$ increases, and exponentially converges to the SNE of FIE. On the other hand, the SNE of MHE improves much slower, and is particularly much worse than that of the FIE solution and the proposed approximate estimator (for $N\geq70$).
\begin{table}[t]
	\begin{threeparttable}
		\centering
		\caption{SNE for the proposed approximate estimator and MHE.}
		\label{tab:J}
		\setlength\tabcolsep{5pt}
		\begin{tabular*}{\columnwidth}{@{\extracolsep{\fill}}ccc}
			\toprule
			\ \ Problem length $N$  & Approximate estimator & MHE \\\midrule
			\ \  40 & 92.603 \ (+52.7\,\%) & 71.334 \ (+17.6\,\%) \\
			\ \  70 & 63.417 \ (+4.6\,\%) & 66.588 \ (+9.8\,\%) \\
			\ \ 100 & 61.387 \ (+1.2\,\%) & 65.429 \ (+7.9\,\%) \\
			\ \ 130 & 61.025 \ (+0.6\,\%) & 64.771 \ (+6.8\,\%) \\
			\ \ 160 & 60.805 \ (+0.3\,\%) & 65.007 \ (+7.2\,\%) \\
			\bottomrule
		\end{tabular*}
		\begin{tablenotes}
			\footnotesize
			\item Values in parentheses indicate the relative increase in the SNE compared
			\item to the optimal FIE solution $x^*_{0:T}$ (which achieves SNE\,=\,60.638).\\[-1ex]
		\end{tablenotes}
	\end{threeparttable}
\end{table}

\section{Conclusion}
We showed that the optimal FIE solution serves as turnpike for the solutions of truncated state estimation problems involving only a subset of the measurement data.
We proposed a method to approximate the optimal FIE solution by a sequence of truncated problems (which can be solved in parallel), and we showed that the resulting performance is approximately optimal.
The numerical example illustrates that the performance of the proposed approximate estimator exponentially converges to the optimal performance if the problem length is increased (which is not the case for MHE).
In our follow-up work \cite{Schiller2025}, we extend our theoretical results to the case of MHE for online state estimation.


\end{document}